\theoremstyle{plain}
\newtheorem{thm}{Theorem}[section]
\newtheorem{lemma}[thm]{Lemma}
\theoremstyle{definition}
\theoremstyle{remark}
\newtheorem{example}[thm]{Example}
\numberwithin{equation}{section}
\def\1{{\rm (1)}}
\def\2{{\rm (2)}}
\def\3{{\rm (3)}}
\def\4{{\rm (4)}}
\def\5{{\rm (5)}}
\def\a{{\rm (a)}}
\def\b{{\rm (b)}}
\begin{document}
\bibliographystyle{amsplain}

%%%%%%%%%%%%%%%%%%%%%%%%%%%%%%%%%%%%%%%%%%%%%%%%%%%%%%%%%
%%%%%%%%%%%%%%%%%%%%%%%%%%%%%%%%%%%%%%%%%%%%%%%%%%%%%%%%%
\title[ON STRONG  $(A)$-Rings]
{ON STRONG $(A)$-Rings}

%%%%%%%%%%%%%%%%%%%%%%%%%%%%%%%%%%%%%%%%%%%%%%%%%%%%%%%%%
%%%%%%%%%%%%%%%%%%%%%%%%%%%%%%%%%%%%%%%%%%%%%%%%%%%%%%%%%
\author{N. Mahdou}

\address{Department of Mathematics, Faculty of Sciences and Technology of Fez, Box 2202, University S. M. Ben Abdellah, Fez,
Maroc}

\email{mahdou@hotmail.com}

\author{A. Rahmouni Hassani}

\address{Department of Mathematics, Faculty of Sciences and Technology of Fez, Box 2202, University S. M. Ben Abdellah, Fez,
Maroc}

\email{rahmounihassani@yahoo.fr}

\thanks{}

\subjclass[2000]{Primary 13G055, 13A15, 13F05; Secondary 13G05,
13F30}

\keywords{$(A)$ - Ring, strong $(A)$ - Ring, trivial ring
extensions, amalgamated duplication of ring along an ideal.}
%%%%%%%%%%%%%%%%%%%%%%%%%%%%%%%%%%%%%%%%%%%%%%%%%%%%%%%%%%%%%%%%%%%%%%%%%%%%%%%%%%%%%%%%%%%%%%%%%%%%%%%%%
\begin{abstract} In this paper, we introduce a strong property $(A)$
and we study the transfer of property $(A)$ and strong property
$(A)$ in trivial ring extensions and amalgamated duplication of a
ring along an ideal. We also exhibit a class of rings which satisfy
property $(A)$ and do not satisfy strong property $(A)$.

\end{abstract}

\maketitle
%%%%%%%%%%%%%%%%%%%%%%%%%%%%%%%%%%%%%%%%%%%%%%%%%%%%%%%%%%%%%%%%%%%%%%%%%%%%%%%%%%%%%%%%%%%%%%%%%%%%%%%%
%%%%%%%%%%%%%%%%%%%%%%%%%%%%%%%%%%%%%%%%%%%%%%%%%%%%%%%%%%%%%%%%%%%%%%%%%%%%%%%%%%%%%%%%%%%%%%%%%%%%%%%%
\section{Introduction}

Throughout this paper, all rings are commutative with identity
element, and all modules are unital. One of important properties
of commutative Notherien rings is that the annihilator of an ideal
$I$ consisting entirely of zero-divisors is nonzero (\cite{K}, p.
56). However, this result fails for some non-Notherian rings, even
if the ideal  $I$ is finitely generated (\cite{K}, p. 63). Huckaba
and Keller \cite{HK} introduced the following: a commutative ring
$R$ has property $(A)$ if every finitely generated ideal of $R$
consisting entirely of zero divisors has a non zero annihilator.
Property $(A)$ was originally studied by Quentel \cite{Q}. Quentel
used the term condition $(C)$ for property $(A)$. The class of
commutative rings with property $(A)$ is quite large; for example,
Notherian rings (\cite{K}, p. 56), rings whose prime ideals are
maximal \cite{HJ}, the polynomial ring $R[X]$ and rings whose
classical ring of quotients are Von Neumann regular \cite{HJ}.
Using this property, Hinkle and Huckaba \cite{HH} extend the
concept Kronecker function rings from integral domains to rings
with zero divisors. Many authors have studied commutative ring $R$
with property ($A$), and have obtained several results which are
useful studying commutative rings with zero-divisors. For
instance, see
\cite{AKR, H, HJ, HK, L1, L2, Q}). \\
In this paper, we investigate a particular class of rings
satisfying property $(A)$
 that we call satisfy strong property $(A)$. A ring $R$ is called satisfying
 strong property $(A)$ if every finitely generated ideal of $R$ which is
 generated by a finite number of zero - divisors elements of $R$,
  has a non zero annihilator; that is, if there exists $a_{i}\in R$
  such that $I =\sum_{i=1}^{n} R a_i$ and $a_{i}\in Z(R)$ for each $i$, then
there exists $0\neq a \in R$ such that $aI = 0$.\\
If a ring $R$ has strong property $(A)$ then $R$ has naturally
property $(A)$. Our aim in this paper is to prove that the converse is false in general. \\
Let $A$ be a ring, E an $A-$module. The trivial ring extension of
$A$ by $E$ (also called the idealization of $E$ over $A$) is the
ring $R := A\propto E$ whose underlying group is  $A\times E$ ,
the set of pairs $(a, e)$ with $a\in A$ and $e\in E$, with
multiplication
given by $(a, e)(a', e')=(aa', ae'+ a'e)$.\\
Trivial ring extensions have been studied extensively; the work is
summarized in Glaz's book \cite{G} and Huckaba's book \cite{H}.
These extensions have been useful for solving many open problems
and conjectures in both commutative and non-commutative ring
theory. See for
instance \cite{BKM, G, H, KM}. \\
Section 2 investigates the transfer of strong property $(A)$ and
property $(A)$ to trivial ring extensions. Using these results, we
construct a class of rings with property
$(A)$ which do not have strong property $(A)$. \\
The amalgamated duplication of a ring $R$ along an $R$-submodule
of the total ring of quotients $T(R)$, introduced by D'Anna and
Fontana and denoted by $R\bowtie E$ (see \cite{A, AF1, AF2}), is
the following subring of $R\times T(R)$ (endowed with the usual
componentwise operation):\\
$$R\bowtie E = \{(r, r+e)/ r\in R, e\in E\}.$$
It is obvious that, if in the $R$-module $R\oplus E$ we introduce
a multiplicative structure by setting $(r, e)(s, f):= (rs, rf + se
+ ef)$, where $r, s\in R$  and $e,f\in E$, then we get the ring
isomorphism $R\bowtie E\cong R\oplus E$. When $E^{2}=0$, this new
construction coincides with the Nagata's idealization. One main
difference between this constructions, with respect to the
idealization (or with respect to any commutative extension, in the
sense of Fossum) is that the ring $R\bowtie E$ can be a reduced
ring and it is always reduced if $R$ is a domain (see \cite{A,
AF2}). If $E= I$ is an ideal in $R$, then the ring $R\bowtie I$ is
a subring
of $R\times R$.\\
  In section 3, we examine the transfer of these
properties to the amalgamated duplication of a ring along an
ideal.
Using these results, we construct a class of ring with property $(A)$ which does not have strong property $(A)$. \\
Through this paper, we denote by an $(A)$-ring  (resp., strong
$(A)$-ring) a ring which satisfies property $(A)$
 (resp., a strong property $(A)$) and $Z(R)$ the set of all zero divisors of
 $R$.\\

%%%%%%%%%%%%%%%%%%%%%%%%%%%%%%%%%%%%%%%%%%%%%%%%%%%%%%%%%%%%%%%%%%%%%%%%%%%%%%%%%%%%%%%%%%%%%%%%%%%%%%%%%%%%%%%%%%%%%
%%%%%%%%%%%%%%%%%%%%%%%%%%%%%%%%%%%%%%%%%%%%%%%%%%%%%%%%%%%%%%%%%%%%%%%%%%%%%%%%%%%%%%%%%%%%%%%%%%%%%%%%%%%%%%%%%%%%%
%%%%%%%%%%%%%%%%%%%%%%%%%%%%%%%%%%%%%%%%%%%%%%%%%%%%%%%%%%%%%%%%%%%%%%%%%%%%%%%
%%%%%%%%%%%%%%%%%%%%%%%%%%%%%%%%%%%%%%%%%%%%%%%%%%%%%%%%%%%%%%%%%%%%%%%%%%%%%%%
\section{Strong Property $(A)$ in trivial ring extensions}\label{Pull}

We begin this section by giving an example of an $(A)$-ring which
is not a strong $(A)$-ring.\\

%%%%%%%%%%%%%%%%%%%%%%%%%%%%%%%%%%%%%%%%%%%%%%%%%%%%%%%%%%%%%%%%%%%%%%%%%%%%%%%
%%%%%%%%%%%%%%%%%%%%%%%%%%%%%%%%%%%%%%%%%%%%%%%%%%%%%%%%%%%%%%%%%%%%%%%%%%%%%%%
\begin{example}
 Let $D = K[X]$ the polynomial ring over a field $K$ and set $R := D\times D$ the direct product
 of $D$ by $D$. Then:\\
{\bf 1)} $R$ is an $(A)$-ring.\\
{\bf 2)} $R$ is not a strong $(A)$-ring.\\
\end{example}

\begin{proof} {\bf 1)}  $R := D\times D$ is an $(A)$-ring by (\cite{HKLR}, Proposition
1.3) since $D$ is an $(A)$-ring .\\
{\bf 2)} We claim that $R := D\times D$ is not a strong
$(A)$-ring. Deny. Let $I:= R(X, X) = XD\times XD = R(X, 0) + R(0,
X) $. We put $a_{1}=(X, 0)$ and $a_{2} =(0, X)$; then $a_{1}a_{2}
= 0$ and so there exists $0_{R}\neq (\alpha,\beta) \in R$ (:$=
D\times D)$ such that $(0, 0) =(\alpha, \beta) I=(\alpha X
D)\times(\beta X D)$. But, $\alpha XD = 0$ and $\beta XD = 0$
implies that $\alpha = \beta = 0$ since $D$ is a domain, a
contradiction. Therefore, $R := D\times D$ is not a strong
$(A)$-ring.
\end{proof}
\bigskip
%%%%%%%%%%%%%%%%%%%%%%%%%%%%%%%%%%%%%%%%%%%%%%%%%%%%%%%%%%%%%%%%%%%%%%%%%%%%%%%%%%%%%%%%%%%%%%%%%%%%%%%%%%%%%%%%%%%%%%%%%%%%
%%%%%%%%%%%%%%%%%%%%%%%%%%%%%%%%%%%%%%%%%%%%%%%%%%%%%%%%%%%%%%%%%%%%%%%%%%%%%%%%%%%%%%%%%%%%%%%%%%%%%%%%%%%%%%%%%%%%%%%%%%%%
Hong an al. \cite{HKLR} proved that a ring $R$ has property $(A)$ if
and only if the trivial ring extension $R\propto R$ has property
$(A)$. Now, we investigate the transfer of strong property $(A)$ and
property $(A)$ to the trivial ring extension of the form $R :=
A\propto
E$, where $E$ is a free $A$-module.\\

\begin{thm}\label{Pull.2}
 Let $A$ be a commutative ring, $E$ a free $A$-module and let
$R := A\propto E$. Then:\\
\1 $A$ is a strong $(A)$-ring  if and only if so is $R$.\\
\2 $A$ is an $(A)$-ring if and only if so is $R$.
\end{thm}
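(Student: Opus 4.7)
The plan is to first obtain an explicit description of the zero-divisors of $R = A\propto E$ when $E$ is free, and then use it to shuttle the two properties between $A$ and $R$ via the natural maps $A\hookrightarrow R$, $a\mapsto (a,0)$, and the projection $\pi\colon R\to A$, $(a,e)\mapsto a$. The key identity I expect to establish first is
\[
Z(R)=Z(A)\propto E,
\]
which uses freeness of $E$ in both directions: if $a\in Z(A)$, pick $c\neq 0$ with $ac=0$ and any basis vector $g$ of $E$; then $(0,cg)\neq 0$ and $(a,e)(0,cg)=(0,0)$. Conversely, if $(a,e)(b,f)=0$ and $a$ is regular in $A$, then $b=0$ and $af=0$, and expanding $f$ in the basis shows $f=0$.

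For the \emph{only if} directions of both (1) and (2), I would take a finitely generated $I=\sum_{i=1}^n R(a_i,e_i)$ of $R$ satisfying the relevant hypothesis and let $J=\sum_{i=1}^n A\,a_i$. In case (1) the $a_i\in Z(A)$ directly by the description of $Z(R)$, so $J$ is generated by zero-divisors. In case (2) I must check that \emph{every} element of $J$ lies in $Z(A)$: any $a=\sum b_ia_i\in J$ is the first coordinate of $(a,\sum b_ie_i)\in I\subseteq Z(R)$, hence $a\in Z(A)$. Apply the hypothesis on $A$ to get $0\neq\alpha\in A$ with $\alpha J=0$, and fix any basis vector $g\in E$. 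Then $(0,\alpha g)\neq 0$ (here freeness guarantees $\alpha g\neq 0$) and $(0,\alpha g)(a_i,e_i)=(0,\alpha a_ig)=(0,0)$, giving a nonzero annihilator of $I$.

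For the \emph{if} directions, start with $J=\sum_{i=1}^n Aa_i\subseteq A$ satisfying the relevant hypothesis, and form $I=\sum_{i=1}^n R(a_i,0)$. In (1) the generators $(a_i,0)$ are zero-divisors of $R$ by the description of $Z(R)$; in (2), any element of $I$ equals $(\sum b_ia_i,\sum y_ia_i)$ with first coordinate in $J\subseteq Z(A)$, so is in $Z(R)$. Apply the hypothesis on $R$ to obtain $(x,y)\neq(0,0)$ with $(x,y)(a_i,0)=(xa_i,ya_i)=0$ for each $i$. If $x\neq 0$, then $x$ is a nonzero annihilator of $J$. Otherwise $y\neq 0$; writing $y=\sum_j c_jf_j$ with $\{f_j\}$ a basis of $E$, the relation $ya_i=\sum_j(c_ja_i)f_j=0$ forces $c_ja_i=0$ for all $i,j$ by linear independence, so any $c_{j_0}\neq 0$ annihilates $J$.

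The only real obstacle is making precise and double-checking the use of freeness at the two points where it is essential: (a) the faithfulness used to guarantee $(0,\alpha g)\neq 0$ for any nonzero $\alpha\in A$, and (b) the linear independence of the basis used to extract a scalar annihilator of $J$ from an $E$-valued annihilator in $R$. Once $Z(R)=Z(A)\propto E$ is in hand, the bookkeeping for parts (1) and (2) is essentially identical, the sole difference being whether the hypothesis applies to the generators of $I$ or to all of $I$; in the latter case I must additionally verify that the translation $I\leftrightarrow J$ preserves the ``every element is a zero-divisor'' condition, as sketched above.
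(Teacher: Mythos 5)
Your proposal is correct and follows essentially the same route as the paper: the same translation $I\leftrightarrow J$ via the inclusion $a\mapsto(a,0)$ and the projection, the same use of a basis expansion to extract a scalar annihilator, and the same choice of annihilators of the form $(0,\alpha g)$. The only difference is organizational: you isolate the identity $Z(R)=Z(A)\propto E$ as a preliminary lemma, whereas the paper proves the same facts inline (and separately treats the case where all $a_i=0$, which your uniform argument absorbs).
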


\begin{proof}
\1 Assume that $A$ is a strong $(A)$-ring and let
$J=\sum_{i=1}^{n} R(a_i, b_i) $ be a finitely generated ideal of
$R$ such that
$(a_{i},b_{i})\in Z(R)$ for each $i$. Two cases are then possible:\\
Case 1: $a_{i} = 0$  for all $i$.
Then for all $0\neq e\in E , (0, e)J = 0$, as desired.\\
Case 2: Assume that there exists $k$ such that $a_{k}\neq 0$ and
set $I := \sum_{i=1}^{n} Aa_{i}.$ We claim that $a_{i}$ is a
zero-divisor for all $i$. Deny. There exists $j$ such that $a_{j}$
is regular.
 Now, let $0_{R} \neq (\alpha_{j},\beta_{j})\in R$ such that
$(\alpha_{j},\beta_{j})(a_{j},b_{j}) = 0$, that is $ \alpha_{j}
a_{j} = 0 $ and $ \alpha_{j}b_{j}+ a_{j}\beta_{j}=0$ (since
$(a_{i}, b_{i})\in Z(R)$). Since $a_{j}$ is regular then $
\alpha_{j}=0$  and $a_{j}\beta_{j} = 0$. But, $\beta_{j}\in E$
which is a free $A$-module, then $\beta_{j} = \sum_{l=1}^{n}
d_{l}c_{l}$ where $C :=(c_{1},...,c_{n},...)$ is a basis of the
free $A$-module $E$ and $d_{l}\in A$ for each $l={1,..,n}$. This
implies that $a_{j}d_{l} = 0$ and so $d_{l} = 0$ for each
$l={1,..,n}$ (since $a_{j}$ is regular); this means that
$\beta_{j} = 0$ and then $(\alpha_{j},\beta_{j}) = 0_{R}$,  a
contradiction since $(\alpha_{j},\beta_{j})\neq 0_{R}$. Therefore,
$a_{i}\in Z(A)$ for each $i={1,..,n}$. \\
 Hence, there exists an element  $0\neq a\in A$ such that $aI
= 0$ since $A$ is a strong $(A)$-ring,. Let $e$ be an element of
$E$ such that $ae\neq 0$ and set $b: = (0, ae)\in R-\{0\}$. Hence,
$bJ = (0, ae)\sum_{i=1}^{n} R(a_{i},b_{i}) = \sum_{i=1}^{n}
R(0,ae)(a_{i},b_{i})=(0,0)$ since $aI = 0$.
It follows that $J$ has a nonzero-annihilator and so $R$ is a strong $(A)$-ring.\\

    Conversely, let $I=\sum_{i=1}^{n} A a_{i}$ be a finitely generated
ideal of $A$  such that $a_{i}\in Z(A)$ for each $i=1,.., n$.
Hence, there exists $ 0\neq b_{i}\in A$ such that $ b_{i}a_{i}=0$.
Set $J:=\sum_{i=1}^{n} R(a_{i},0)$ be a finitely generated ideal
of $R$. But, $(b_{i},0)(a_{i},0) = (b_{i}a_{i},0) = (0, 0)$.
Hence, there exists  $0_{R}\neq (a,e)\in R$ such that  $ (a,e)J =
0_{R}$ since $R$ is a strong $(A)$-ring; that means that $aa_{i} =
0$ and
$ea_{i}=0$ for each $i$. Two cases are then possible:\\
Case 1: $a\neq 0$. Then $aI = 0$, as desired.\\
Case 2: $a = 0$. In this case, $eI = 0$ and $e\neq 0$ (since $(0,
e)\neq (0,
 0)$).
 On the other hand, $e\in E$ is a free $A$-module, then $e$ is of the form $e=\sum_{i=1}^{n} f_{i}c_{i}$ where
 $C := (c_{1},..., c_{n},..)$ is a basis of $E$ and $f_{i}\in A$ for
 each $i = 1,...,n$. It follows that, $0 = eI = \sum_{i=1}^{n} (f_{i}I)c_{i}$ and then
 $f_{i}I = 0$ for each $i = 1,..., n$. Now, let  $j\in \{1,.., n\}$ such
 that $f_{j}\neq 0$ (possible since $e = \sum_{i=1}^{n} f_{i}c_{i}\neq
 0)$. Therefore, $f_{j}I = 0$, as desired. \\
 It follows that $I$ has a nonzero annihilator in all cases. Therefore, $A$ is a strong $(A)$-ring.
 completing the proof of $(1)$.\\

\2 Assume that $A$ is an $(A)$-ring and let $J = \sum_{i=1}^{n}
R(a_i, b_i) \subseteq Z(R)$ be
a finitely generated ideal of $R$. Two cases are then possible:\\
Case 1: $a_{i} = 0$ for all $i$. Hence, for each $0\neq e\in E$, we have $(0, e)J =(0, e)\sum_{i=1}^{n} R(0, b_i)= (0,0)$, as desired. \\
Case 2: There exists $i$ such that $a_{i}\neq 0$. Set $I :=
\sum_{i=1}^{n} Aa_{i}$. We wish to show that $I\subseteq Z(A)$.
Let $a\in I$,  that is $a=\sum_{i=1}^{n} \alpha_{i} a_{i} \in I$
for some $\alpha_{i}\in A$. Then $(a, e) := \sum_{i=1}^{n}
(\alpha_{i}, 0)(a_i, b_i)\in J \subseteq Z(R)$. Therefore, there
exists a nonzero element $(b,f)\in R$ such that $(0, 0) = (b,
f)(a,e) = (ba, be+af)$. Two cases are then possible:\\
\a:  $b\neq 0$. In this case  $ba = 0$, as desired.\\
\b:   $b = 0$. Then $f\neq 0$ (since $(b, f) \neq (0, 0)$) and $af
= 0$. But, $f\in E$ which is a free $A$-module, then
$f=\sum_{i=1}^{n} f_{i}c_{i}$ where $C := (c_{1},...,c_{n}...)$ is
a basis of the free $A$-module and  $f_{i}\in A$ for each
$i=1,..,n$. This implies that $af_{i}= 0$ for each $i = 1,...n$.
Now, let  $j\in \{1,.., n\}$ such
 that $f_{j}\neq 0$ (possible since $f = \sum_{i=1}^{n} f_{i}c_{i}\neq
 0)$. Therefore, $af_{j}=0$ and so $I\subseteq
Z(A)$. Hence, there exists a nonzero element $d\in A$ such that $0
= dI$, since $A$ is an $(A)$-ring. Let $e$ be an element of $E$
such that $de\neq 0$, and set $b := (0, de)\in R-\{0\}$ . Hence,
$bJ = (0, de)\sum_{i=1}^{n} R(a_{i},b_{i}) = (0,0)$ since $dI =
0$.
It follows that $J$ has a non-zero annihilator and so $R$ is an $(A)$-ring.\\

    Conversely, let $I=\sum_{i=1}^{n} A a_{i}\subseteq Z(A)$ be a finitely generated ideal of
$A$. Set $J :=\sum_{i=1}^{n} R(a_{i},0)$ and we wish to show that
$J\subseteq Z(R)$. Let $(b,f)\in J$, that is $(b,f) =\sum_{i=1}^{n}
(\alpha_{i}, \beta_{i})(a_{i},0) = (\sum_{i=1}^{n}\alpha_{i}a_{i},
\sum_{i=1}^{n}\beta_{i}a_{i})$ for
some $(\alpha_{i}, \beta_{i})\in R$. Two case are then possible:\\
Case 1:  $b := \sum_{i=1}^{n}\alpha_{i}a_{i} (\in I)\neq 0$. Since
$I\subseteq Z(A)$, there exists an element $0 \not= a\in A$ such
that $a(\sum_{i=1}^{n}\alpha_{i}a_{i}) = 0$. Let $e\in E$ such
that $ae\neq 0$. Then $(b,f)(0, ae)
=[\sum_{i=1}^{n}(\alpha_{i},\beta_{i})(a_{i}, 0)](0 ,ae) =
(\sum_{i=1}^{n}\alpha_{i}a_{i},
\sum_{i=1}^{n}\beta_{i}a_{i})(0,ae)=(0, 0)$, as desired. \\
Case 2: $b := \sum_{i=1}^{n}\alpha_{i}a_{i} = 0$. Then,
  $(0, f)(0, e) = (0, 0)$ for all $0 \not= e\in E$ and so $(b, f)\in Z(R)$.\\
  Hence, $(b,f)\in Z(R)$ in all cases, which means that $J\subseteq Z(R)$. \\
  Since $R$ is an $(A)$-ring, then
 $(a,e)J$ (:$= (a,e)(\sum_{i=1}^{n}A a_{i},\sum_{i=1}^{n} E a_{i})) =  (0,0)$ for some $0_{R}\neq (a, e)\in R$.
  We obtain
 $a \sum_{i=1}^{n} A a_{i} = 0$ and $a\sum_{i=1}^{n} E a_{i} + e \sum_{i=1}^{n} A
 a_{i}) = 0$, that is $aI = 0$ and $a I E + eI = 0$. Two cases are then possible:\\
$\star_{1}$: $a\neq 0$. Then $I$  has a nonzero annihilator (since
 $aI=0$), as desired.\\
$\star_{2}$: $a = 0$. In this case, $eI = 0$ and $e\neq 0$ (since
$(a, e)\neq 0$). On the other hand, $e\in E$ is a free $A$-module,
then $e$ is of
 the form $e=\sum_{i=1}^{n}b_{i} c_{i}$ where $C :=(c_{1},...,c_{n},..)$
 is a basis of a free $A$ - module $E$ and $b_{i}\in A$ for
 each $i=1,...,n$. It follows that, $0 = eI= \sum_{i=1}^{n}
 (b_{i}I)c_{i}$ and then $b_{i}I = 0$ for each $i={1,..,n}$. Now let
 $j\in {1,..,n}$ be such that $b_{j}\neq 0$ (possible since $e = \sum_{i=1}^{n} b_{i}c_{i}\neq
 0$. So, $b_{j}I = 0$ and $b_{j}\neq 0$. It follows that $I$ has a nonzero-annihilator
 and so $A$ is an $(A)$-ring.  This completes the proof of the Theorem.
\end{proof}
%%%%%%%%%%%%%%%%%%%%%%%%%%%%%%%%%%%%%%%%%%%%%%%%%%%%%%%%%%%%%%%%%%%%%%%%%%%%%%%%%%%%%%%%%%%%%%%%%%%%%%%%%%%%%%%%%%%%%%
%%%%%%%%%%%%%%%%%%%%%%%%%%%%%%%%%%%%%%%%%%%%%%%%%%%%%%%%%%%%%%%%%%%%%%%%%%%%%%%%%%%%%%%%%%%%%%%%%%%%%%%%%%%%%%%%%%%%%%
Now we are able to give a class of rings which are $(A)$-rings and
which are not a strong $(A)$-rings.
\bigskip
%%%%%%%%%%%%%%%%%%%%%%%%%%%%%%%%%%%%%%%%%%%%%%%%%%%%%%%%%%%%%%%%%%%%%%%%%%%%%%%%%%%%%%%%%%%%%%%%%%%%%%%%%%%%%%%%%%%%%%
%%%%%%%%%%%%%%%%%%%%%%%%%%%%%%%%%%%%%%%%%%%%%%%%%%%%%%%%%%%%%%%%%%%%%%%%%%%%%%%%%%%%%%%%%%%%%%%%%%%%%%%%%%%%%%%%%%%%%%
\begin{example}\label{Pull.3}
Let $A$ be an $(A)$-ring which is not a strong $(A)$-ring (see
[Example 2.1]). Set $R :=A\propto E$, where $E$ is a free  $A$ - module. Then: \\
\1  $R$ is not a strong  $(A)$-ring by Theorem 2.2.(1).\\
\2  $R$ is an $(A)$-ring by Theorem 2.2.(2).
\end{example}
\bigskip
%%%%%%%%%%%%%%%%%%%%%%%%%%%%%%%%%%%%%%%%%%%%%%%%%%%%%%%%%%%%%%%%%%%%%%%%%%%%%%%%%%%%%%%%%%%%%%%%%%%%%%%%%%%%%%%%%%%%%%%%%%%%%%%%%%%%%%%%%%%%%%%%%%%%%%%
%%%%%%%%%%%%%%%%%%%%%%%%%%%%%%%%%%%%%%%%%%%%%%%%%%%%%%%%%%%%%%%%%%%%%%%%%%%%%%%%%%%%%%%%%%%%%%%%%%%%%%%%%%%%%%%%%%%%%%%%%%%%%%%%%%%%%%%%%%%%%%%%%%%%%%%

Now we study the transfer of the strong property $(A)$ (resp.
property $(A)$) to trivial ring extensions of any ring by its
quotient field. \\

\begin{thm}\label{Pull.4}
  Let $A$ be a ring, Then:\\

\1 Let $Q(A)$ be the total ring of quotient of $A$, and let $R:=
A\propto Q(A)$ be the trivial ring extension of $A$ by $Q(A)$.
Then: \\
 {\bf a)} $R$ is a strong $(A)$-ring if and only if so is $A$.\\
{\bf b)} $R$ is an $(A)$-ring if and only if so is $A$.\\
\2 Let $A$ be an integral domain, $k :=qf(A)$ be the quotient
field of $A$, $E$ be a $k$ - vector space, and  let $R := A\propto
E$ be the trivial ring extension of $A$ by $E$. Then $R$ is a
strong $(A)$-ring. In particular, $R$ is an $(A)$-ring.\\
\3 Let $E$ be an $A$-module and let $R:= A\propto E$ be the
trivial ring extension of $A$ by $E$ such that $S^{-1}E$ =
$S^{-1}A$ for some multiplicatively closed subset $S$ of $A$
consisting of regular
elements. Then: \\
{\bf a)} $R$ is a strong $(A)$-ring if and only if so is $A$.\\
{\bf b)} $R$ is an $(A)$-ring if and only if so is $A$.
\end{thm}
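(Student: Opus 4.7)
The plan is to dispose of Part (1) as an instance of Part (3), dispatch Part (2) by a direct computation of $Z(R)$, and then adapt the strategy of Theorem~\ref{Pull.2} to prove Part (3). Part (1) reduces to Part (3) upon taking $S = \mathrm{reg}(A)$, the set of regular elements of $A$: then $Q(A) = S^{-1}A$ and $S^{-1}Q(A) = Q(A) = S^{-1}A$, which is exactly the hypothesis of (3) for $E = Q(A)$, so both (1)(a) and (1)(b) follow from (3)(a) and (3)(b).

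For Part (2), I would identify $Z(R)$ explicitly. Since $A$ is a domain and every nonzero $a \in A$ acts invertibly on the $k$-vector space $E$, the equation $(a,e)(c,d) = (0,0)$ with $(c,d) \neq (0,0)$ forces $ac = 0$ and hence $c = 0$, and then $ad = 0$ and hence $d = 0$, a contradiction; so $a = 0$, giving $Z(R) = \{0\} \propto E$. Because $(0,e)(0,f) = (0,0)$ for all $e, f \in E$, any finitely generated ideal $J$ of $R$ generated by elements of $Z(R)$ sits inside $0 \propto E$ and is annihilated by every nonzero $(0,f)$. This establishes strong $(A)$, and $(A)$ is then automatic.

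For Part (3), I would follow the template of the proof of Theorem~\ref{Pull.2}, substituting appeals to a basis of $E$ by appeals to the isomorphism $S^{-1}E \cong S^{-1}A$. In the forward direction of (3)(a), given $J = \sum_{i=1}^{n} R(a_i, b_i)$ generated by zero-divisors with some $a_k \neq 0$, the first step is to show each $a_i \in Z(A)$: if $a_j$ were regular, a witness $(c,d) \neq (0,0)$ to $(a_j, b_j) \in Z(R)$ forces $c = 0$ and $a_j d = 0$ with $d \neq 0$; passing to $S^{-1}E = S^{-1}A$ and clearing denominators (using that $S$ consists of regular elements, so $A$ injects into $S^{-1}A$) produces a nonzero $\alpha \in A$ with $a_j \alpha = 0$, contradicting regularity of $a_j$. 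Given $a_i \in Z(A)$ for all $i$, strong $(A)$ of $A$ produces $a \neq 0$ with $a a_i = 0$ for all $i$; since $a$ has nonzero image in $S^{-1}A = S^{-1}E$, some $e \in E$ satisfies $ae \neq 0$, and $(0, ae)$ then annihilates $J$. The converse of (3)(a) and both directions of (3)(b) proceed on the same template as in Theorem~\ref{Pull.2}, with the free-basis expansion of elements of $E$ replaced by the pushforward along the localization isomorphism.

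The most delicate point will be the subcase in which the witness $d$ above is itself $S$-torsion: then its image in $S^{-1}E = S^{-1}A$ vanishes and the localization argument does not conclude that $a_j \in Z(A)$. Handling this case---presumably by extracting a nonzero annihilator of $J$ from the $S$-torsion submodule of $E$, or by sharpening the use of $S^{-1}E \cong S^{-1}A$ beyond a mere torsion-detection device---is where I expect the main difficulty to lie.
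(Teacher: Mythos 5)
Your treatments of Parts (1) and (2) are correct and essentially match the paper's. (The paper does not literally reduce (1) to (3), but proves (1), (2) and (3) by the same device: localize $R$ at the multiplicative set $\overline{S}=\{(s,0):s\in S\}$, use Lemma~\ref{Pull.5} (resp.\ Lemma~\ref{Pull.7}) to transfer strong property $(A)$ (resp.\ property $(A)$) between $R$ and $S^{-1}R\cong S^{-1}A\propto S^{-1}E$, and then quote Theorem~\ref{Pull.2} for the free module $S^{-1}A$ over $S^{-1}A$, or Lemma~\ref{Pull.6} for $k\propto E$ in case (2). Your direct computation of $Z(R)=0\propto E$ in Part (2) is just the proof of Lemma~\ref{Pull.6} specialized; for Part (3) the localization route is considerably shorter than redoing the element-wise argument of Theorem~\ref{Pull.2}, and you should be aware of it.)

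The difficulty you flag at the end of Part (3) is, however, a genuine gap, and it cannot be closed: it is not an artifact of your element-wise approach but an error in the statement itself (and, silently, in the paper's proof, which needs $(s,0)$ to be regular in $R$ for Lemmas~\ref{Pull.5} and~\ref{Pull.7} to apply, i.e.\ needs $E$ to have no $S$-torsion --- something the hypothesis $S^{-1}E=S^{-1}A$ does not guarantee). Concretely, take $A=\mathbb{Z}$, $E=\mathbb{Z}\oplus\mathbb{Z}/2\mathbb{Z}\oplus\mathbb{Z}/3\mathbb{Z}$ and $S=\{6^{n}:n\geq 0\}$. Then $S$ consists of regular elements and $S^{-1}E=\mathbb{Z}[1/6]=S^{-1}A$, and $A$ is trivially a strong $(A)$-ring; yet $(2,0)$ and $(3,0)$ are zero-divisors of $R=A\propto E$ (annihilated by $(0,(0,\overline{1},\overline{0}))$ and $(0,(0,\overline{0},\overline{1}))$ respectively), while $R(2,0)+R(3,0)\ni(3,0)-(2,0)=(1,0)=1_{R}$, so this ideal is all of $R$ and has zero annihilator. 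Hence $R$ is not a strong $(A)$-ring and (3)(a) fails as stated. The correct fix is to assume in addition that $S$ acts regularly on $E$ (e.g.\ that $E$ is an $A$-submodule of $S^{-1}A$); under that hypothesis your troublesome witness $d$ always has nonzero image in $S^{-1}E$, your clearing-of-denominators argument closes, and the paper's localization argument becomes legitimate. You should state this extra hypothesis explicitly rather than hope to handle the $S$-torsion subcase.
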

\bigskip
%%%%%%%%%%%%%%%%%%%%%%%%%%%%%%%%%%%%%%%%%%%%%%%%%%%%%%%%%%%%%%%%%%%%%%%%%%%%%%%%%%%%%%%%%%%%%%%%%%%%%%%%%%%%%%%%%%%%%%%%%%%%
%%%%%%%%%%%%%%%%%%%%%%%%%%%%%%%%%%%%%%%%%%%%%%%%%%%%%%%%%%%%%%%%%%%%%%%%%%%%%%%%%%%%%%%%%%%%%%%%%%%%%%%%%%%%%%%%%%%%%%%%%%%%
Before proving this Theorem, we establish the following Lemmas.\\

\begin{lemma}\label{Pull.5}
Let $A$ be a ring and let $S$ be a multiplicatively closed subset
of $A$ consisting entirely of regular elements. Then  $A$ is a
strong $(A)$-ring if and only if so is $S^{-1}A$.
\end{lemma}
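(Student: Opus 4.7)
The plan is to use the fact that since $S$ consists of regular elements, the canonical map $A \to S^{-1}A$ is injective, which lets us transfer annihilators in both directions without losing nonzeroness. The key preliminary observation is that $Z(S^{-1}A)$ and $Z(A)$ correspond cleanly: an element $a/s \in S^{-1}A$ is a zero-divisor if and only if $a \in Z(A)$. One direction is immediate by clearing denominators; the other direction uses that if $(a/s)(c/t) = 0$ with $c/t \neq 0$, then $uac = 0$ for some $u \in S$, and regularity of $u$ forces $ac = 0$, while $c/t \neq 0$ together with regularity of $S$ forces $c \neq 0$.

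For the forward direction, suppose $A$ is strong $(A)$. Let $J = \sum_{i=1}^n (S^{-1}A)(a_i/s_i)$ be a finitely generated ideal of $S^{-1}A$ with each $a_i/s_i$ a zero-divisor. Since each $s_i$ is a unit in $S^{-1}A$, we may rewrite $J = \sum_{i=1}^n (S^{-1}A)(a_i/1)$. By the characterization above, each $a_i \in Z(A)$. Setting $I := \sum_{i=1}^n A a_i$, the hypothesis yields $0 \neq a \in A$ with $aI = 0$. Then $a/1 \in S^{-1}A$ is nonzero (regularity of $S$ again) and annihilates every generator of $J$, so it annihilates $J$.

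For the reverse direction, suppose $S^{-1}A$ is strong $(A)$. Let $I = \sum_{i=1}^n A a_i$ be finitely generated with each $a_i \in Z(A)$. Extending to $S^{-1}A$ gives an ideal $J := \sum_{i=1}^n (S^{-1}A)(a_i/1)$ whose generators are zero-divisors in $S^{-1}A$. By hypothesis there exists $0 \neq b/s \in S^{-1}A$ with $(b/s)J = 0$; i.e., for each $i$ there exists $t_i \in S$ with $t_i b a_i = 0$. Put $t := \prod_{i=1}^n t_i \in S$; then $(tb)a_i = 0$ for every $i$, so $(tb)I = 0$. Because $b/s \neq 0$ forces $b \neq 0$ (using that any $u \in S$ with $ub = 0$ would give $b = 0$ by regularity), and $t$ is regular as a product of regular elements, $tb \neq 0$. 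Hence $I$ has nonzero annihilator in $A$.

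The only subtle step is verifying the nonzeroness of the annihilators produced in each direction; this is where the hypothesis that $S$ consists of regular elements is used twice (once to ensure $A \hookrightarrow S^{-1}A$ so elements stay nonzero when passed up, and once to ensure that multiplying by an $s \in S$ does not kill a nonzero element when passing back down). No other obstacle arises, so I expect the proof to be short.
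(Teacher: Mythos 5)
Your proof is correct and follows essentially the same route as the paper's: reduce the generators to the form $a_i/1$, use the correspondence $a/s\in Z(S^{-1}A)\Leftrightarrow a\in Z(A)$ (which the paper uses implicitly), and transfer the annihilator in each direction via the injectivity of $A\to S^{-1}A$. You are in fact more careful than the paper in clearing the ``$\exists u\in S$'' witnesses when interpreting equalities in the localization, which is exactly where the regularity hypothesis is needed.
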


\begin{proof} Assume that $A$ is a strong $(A)$-ring and let $R =
S^{-1}A$. Let $J := \sum_{i=1}^{n}R a_{i}b_{i}^{-1}$  be a
finitely generated ideal of $R$ such that $a_{i}b_{i}^{-1}\in
Z(R)$. Set $I := \sum_{i=1}^{n}A a_{i}\subseteq J $. Thus, there
exists an element $0 \not= c_{i}d_{i}^{-1}\in R$ such that
$c_{i}d_{i}^{-1}a_{i}b_{i}^{-1} = 0$ (since $a_{i}b_{i}^{-1}\in
Z(R)$); hence $c_{i}a_{i} = 0$ and so $a_{i}\in Z(A)$. Since $A$
is a strong $(A)$-ring, there exists an element $0 \not= b\in A$
such that
$bI = 0$. Therefore, $bJ = 0$ and so $R$ is a strong $(A)$-ring.\\

Conversely, suppose that $R$ is a strong $(A)$-ring and let $I =
\sum_{i=1}^{n}A a_{i}$ be a finitely generated ideal of $A$ such
that $a_{i}\in Z(A)$. Then, note that $J := \sum_{i=1}^{n}R a_{i}$
is a finitely generated ideal of $R$ such that $a_{i}\in Z(R)$.
Since $R$ is a strong $(A)$-ring, there exists an element $0 \not=
ab^{-1}\in R$ such that $ab^{-1}J = 0$. Hence, $a\sum_{i=1}^{n}A
a_{i} = 0$ and so $A$ is a strong $(A)$-ring.\\
\end{proof}
%%%%%%%%%%%%%%%%%%%%%%%%%%%%%%%%%%%%%%%%%%%%%%%%%%%%%%%%%%%%%%%%%%%%%%%%%%%%%%%%%%%%%%%%%%%%%%%%%%%%%%%%%%%%%%%%%%%%%%%%%%%%
%%%%%%%%%%%%%%%%%%%%%%%%%%%%%%%%%%%%%%%%%%%%%%%%%%%%%%%%%%%%%%%%%%%%%%%%%%%%%%%%%%%%%%%%%%%%%%%%%%%%%%%%%%%%%%%%%%%%%%%%%%%%
\begin{lemma}\label{Pull.6}

Let $D$ be a domain, $E$ be a torsion free $D$-module and let $R
:=D\propto E$ be the trivial ring extension of $D$ by $E$. Then
$R$ is a strong $(A)$-ring. In particular, it is an $(A)$-ring.
\end{lemma}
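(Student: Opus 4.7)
The plan is to show that, under the torsion-free hypothesis, the zero-divisors of $R=D\propto E$ are concentrated in $0\propto E$, and that any such element is annihilated by every element of $0\propto E$. This makes the strong property $(A)$ essentially automatic.

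First I would characterize $Z(R)$. Suppose $(a,b)\in R$ is a nonzero zero-divisor, so there exists $(c,d)\in R\setminus\{0\}$ with $(a,b)(c,d)=(ac,ad+bc)=(0,0)$. The first coordinate gives $ac=0$ in the domain $D$, so $a=0$ or $c=0$. If $a\neq 0$, then $c=0$, and the second coordinate becomes $ad=0$; since $E$ is torsion-free and $a\neq 0$, this forces $d=0$, contradicting $(c,d)\neq 0$. Hence $a=0$, i.e.\ every element of $Z(R)$ lies in $0\propto E$.

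Next, let $J=\sum_{i=1}^{n}R(a_{i},b_{i})$ be a finitely generated ideal of $R$ with $(a_{i},b_{i})\in Z(R)$ for each $i$. By the previous step $a_{i}=0$ for all $i$, and so $J=\sum_{i=1}^{n}R(0,b_{i})\subseteq 0\propto E$. If $E=0$, then $R=D$ is a domain, $J=0$, and $1\neq 0$ annihilates $J$. Otherwise pick any $0\neq e\in E$; then $(0,e)\in R$ is nonzero and
\[
(0,e)(0,b_{i})=(0\cdot 0,\,0\cdot b_{i}+e\cdot 0)=(0,0)
\]
for each $i$, so $(0,e)J=0$. Thus $R$ is a strong $(A)$-ring, and the "in particular" statement follows since every strong $(A)$-ring is an $(A)$-ring, as noted in the introduction.

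There is really no obstacle here: the torsion-free assumption does the heavy lifting by forcing $Z(R)\subseteq 0\propto E$, after which any nonzero element of the same subgroup serves as a uniform annihilator. The only point that requires a moment of care is the case $E=0$, which is handled trivially.
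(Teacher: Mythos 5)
Your proof is correct and follows essentially the same route as the paper: use $ac=0$ in the domain together with torsion-freeness of $E$ to force every zero-divisor of $R$ to have first coordinate $0$, so that $J\subseteq 0\propto E$ and any $(0,e)$ with $e\neq 0$ annihilates $J$. The only difference is your explicit (and harmless) treatment of the degenerate case $E=0$, which the paper leaves implicit.
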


\begin{proof} Let $J=\sum_{i=1}^{n} R(a_{i},e_{i})$ be a finitely generated proper ideal of $R$
such that $(a_{i}, e_{i})\in Z(R)$. We claim that $a_{i} = 0$ for
each $i=1,..,n$. Deny. There  exists $i =1, \ldots ,n$ such that
$a_{i} \not= 0$ and there exists $(b_{i},f_{i})\in R-(0,0)$ such
that $(0, 0) = (a_{i}, e_{i})(b_{i}, f_{i}) = (a_{i}b_{i},
a_{i}f_{i} + e_{i}b_{i})$. Thus, $a_{i}b_{i} = 0$ and $a_{i}f_{i}
+ e_{i}b_{i} =0$ and so $b_{i}=0$ (since $a_{i} \neq 0$ and $D$ is
a domain) and then $f_{i}=0$ (since $a_{i}f_{i}=0$, $a_{i} \not=
0$, and $E$ is a torsion free $D$-module), a contradiction, as
$(b_{i},f_{i})\neq (0,0)$. Hence, $a_{i}=0$ for each $i=1,..,n$.
\\
Hence, $J\subseteq 0\propto E$. Therefore, $(0, e)J\subseteq (0,
e)(0\propto E)=(0, 0)$ for each $0\neq e\in E$ and so $R$ is a
strong $(A)$-ring. In particular, $R$ is an $(A)$-ring.
\end{proof}
%%%%%%%%%%%%%%%%%%%%%%%%%%%%%%%%%%%%%%%%%%%%%%%%%%%%%%%%%%%%%%%%%%%%%%%%%%%%%%%%%%%%%%%%%%%%%%%%%%%%%%%%%%%%%%%%%%%%%%%%%%%%
%%%%%%%%%%%%%%%%%%%%%%%%%%%%%%%%%%%%%%%%%%%%%%%%%%%%%%%%%%%%%%%%%%%%%%%%%%%%%%%%%%%%%%%%%%%%%%%%%%%%%%%%%%%%%%%%%%%%%%%%%%%%
\begin{lemma}\label{Pull.7}(\cite{HKLR}, Proposition 2.14)
Let $A$ be a ring and let $S$ a multiplicatively closed subset of
$A$ consisting of regular elements. Then  $A$ is an $(A)$-ring if
and only if so is $S^{-1}A$.
\end{lemma}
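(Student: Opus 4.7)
The plan is to prove both implications in parallel by exploiting the fact that, because $S$ consists of regular elements, the natural map $A \to S^{-1}A$ is injective, so for any $c \in A$ we have $c/1 = 0$ in $S^{-1}A$ if and only if $c = 0$.

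For the forward direction, suppose $A$ is an $(A)$-ring and let $J = \sum_{i=1}^{n} (S^{-1}A)(a_i/s_i) \subseteq Z(S^{-1}A)$ be a finitely generated ideal. Since each $s_i$ is a unit in $S^{-1}A$, we may rewrite $J = \sum_{i=1}^{n} (S^{-1}A)\, a_i$. Set $I := \sum_{i=1}^{n} A a_i$. The key step is showing $I \subseteq Z(A)$: given $a = \sum \alpha_i a_i \in I$, the element $a/1$ lies in $J$, hence is a zero-divisor in $S^{-1}A$, so there exists $b/t \neq 0$ with $uab = 0$ for some $u \in S$; since $b/t \neq 0$ forces $ub \neq 0$ (using regularity of $u$), we get $a \in Z(A)$. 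Property $(A)$ of $A$ yields $c \neq 0$ with $cI = 0$, and then $c/1 \neq 0$ (again by regularity of $S$) annihilates $J$.

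For the converse, suppose $S^{-1}A$ is an $(A)$-ring and let $I = \sum_{i=1}^{n} A a_i \subseteq Z(A)$ be finitely generated. Form $J := \sum_{i=1}^{n} (S^{-1}A)\, a_i$. To verify $J \subseteq Z(S^{-1}A)$, take $a/s \in J$ with $a \in I$; then $ab = 0$ for some $0 \neq b \in A$, and $b/1 \neq 0$ by regularity of $S$, so $(a/s)(b/1) = 0$ exhibits $a/s$ as a zero-divisor. Property $(A)$ of $S^{-1}A$ gives $c/s \neq 0$ with $(c/s)J = 0$. Clearing denominators in each equation $(c/s)(a_i/1) = 0$ yields $u_i c a_i = 0$ for suitable $u_i \in S$; taking the product $u := \prod u_i \in S$, we get $(uc)\, I = 0$, and $uc \neq 0$ because $c/s \neq 0$ forces $tc \neq 0$ for every $t \in S$.

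The main obstacle, modest as it is, is the bookkeeping about when an element vanishes in the localization versus in $A$; the regularity hypothesis on $S$ is used repeatedly to transport nonvanishing back and forth (both to produce honest zero-divisors in $A$ from zero-divisors in $S^{-1}A$ and to guarantee that the annihilator produced downstairs remains nonzero upstairs). No other subtleties arise, and no additional results beyond elementary localization manipulations are needed.
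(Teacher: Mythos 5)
Your proof is correct. Note that the paper itself offers no proof of this lemma: it is quoted verbatim from Hong--Kim--Lee--Ryu (Proposition 2.14 of \cite{HKLR}), so there is nothing in the paper to compare line by line. Your argument is the natural one and runs parallel to the paper's own proof of the strong-property analogue (Lemma 2.5), with the one extra step that the $(A)$-property requires: you must check that the \emph{entire} ideal $I=\sum A a_i$ (resp. $J$) lies in $Z(A)$ (resp. $Z(S^{-1}A)$), not merely the generators, and you do this correctly by writing a general element of $J$ as $a/s$ with $a\in I$ and a general element of $I$ as $a$ with $a/1\in J$. The repeated use of regularity of $S$ --- so that $c/1=0$ forces $c=0$ and $uc\neq 0$ whenever $c\neq 0$ --- is exactly where the hypothesis enters, and you have placed it correctly at each transfer of a nonzero annihilator between $A$ and $S^{-1}A$. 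No gaps.
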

\bigskip
%%%%%%%%%%%%%%%%%%%%%%%%%%%%%%%%%%%%%%%%%%%%%%%%%%%%%%%%%%%%%%%%%%%%%%%%%%%%%%%%%%%%%%%%%%%%%%%%%%%%%%%%%%%%%%%%%%%%%%%%%%%%
%%%%%%%%%%%%%%%%%%%%%%%%%%%%%%%%%%%%%%%%%%%%%%%%%%%%%%%%%%%%%%%%%%%%%%%%%%%%%%%%%%%%%%%%%%%%%%%%%%%%%%%%%%%%%%%%%%%%%%%%%%%%
\begin{proof}  [Proof of Theorem 2.4].\\

\1{\bf a)} Set $R := A\propto Q(A)$ and let $S =R\setminus Z(R)$.
 Hence, $R$ is a strong $(A)$-ring if and only if so is $S^{-1}R$,
 by Lemma 2.5 since $S =R\setminus Z(R)$. Then, $R$ is a strong
$(A)$-ring if and only if so is $Q(A)\propto Q(A)$ (:$=
S^{-1}A\propto S^{-1}A = S^{-1}R)$. On the other hand,
$Q(A)\propto Q(A)$ is a strong $(A)$-ring if and only if so is
$Q(A)$, by Theorem 2.2(1). Therefore, $R$ is a strong $(A)$-ring
if and only
if so is $(A)$, by Lemma 2.5.\\
{\bf b)} We know that $R :=A\propto Q(A)$ is an $(A)$-ring if and
only if so is $S^{-1}R$, by Lemma 2.7 since $S = R\setminus Z(R)$.
Then, $R$ is an $(A)$-ring if and only if so is $Q(A)\propto Q(A)$
(:$=S^{-1}A\propto S^{-1}A= S^{-1}R)$. On the other hand,
$Q(A)\propto Q(A)$ is an $(A)$-ring if and only if so is $Q(A)$,
by (\cite{HKLR}, Theorem 2.3). Therefore, $R$ is an
 $(A)$-ring if and only if so is $A$, by Lemma 2.7.\\

\2 Now, $R := A\propto E$ is a strong $(A)$-ring (in particular,
$R$ is an $(A)$-ring) if and only if so is $S^{-1}R$. Then $R$ is
a strong $(A)$-ring (in particular, $R$ is an $(A)$-ring) if and
only if so is $K\propto E$ (:$=S^{-1}A\propto S^{-1}E= S^{-1}R)$.
On the other hand, $K\propto E$ is a strong  $(A)$ -ring (in
particular, $K\propto E$ is an $(A)$-ring) by Lemma 2.6.
Therefore, $R$ is a strong $(A)$-ring
(in particular, $R$ is an $(A)$-ring).\\

\3{\bf a)} The ring $R:= A\propto E$ is a strong $(A)$-ring if and
only if so is $S^{-1}R$. Then $R$ is a strong $(A)$- ring if and
only if so is $S^{-1}A\propto S^{-1}A$ (:$=S^{-1}A\propto S^{-1}E
= S^{-1}R)$. On the other hand, $S^{-1}A\propto S^{-1}A$ is a
strong $(A)$-ring if and only if so is $S^{-1}A$ by Theorem
2.2(1).
Therefore, $R$ is a strong $(A)$-ring if and only if so is $A$ by Lemma 2.5.\\
{\bf b)} The ring $R:= A\propto E$ is an $(A)$-ring if and only if
so is $S^{-1}R$. Then $R$ is an $(A)$-ring if and only if so is
$S^{-1}A\propto S^{-1}A$ (:$= S^{-1}A\propto S^{-1}E = S^{-1}R)$.
On the other hand, $S^{-1}A\propto S^{-1}A$ is an $(A)$-ring if
and only if so is $S^{-1}A$ by (\cite{HKLR}, Theorem
2.3). Therefore, $R$ is an $(A)$- ring if and only if so is $A$ by Lemma 2.7.\\
 This completes the proof of Theorem 2.4.
\end{proof}
%%%%%%%%%%%%%%%%%%%%%%%%%%%%%%%%%%%%%%%%%%%%%%%%%%%%%%%%%%%%%%%%%%%%%%%%%%%%%%%%%%%%%%%%%%%%%%%%%%%%%%%%%%%%%%%%%%%%%%%%%%%%
%%%%%%%%%%%%%%%%%%%%%%%%%%%%%%%%%%%%%%%%%%%%%%%%%%%%%%%%%%%%%%%%%%%%%%%%%%%%%%%%%%%%%%%%%%%%%%%%%%%%%%%%%%%%%%%%%%%%%%%%%%%%
 Now, we study the homomorphic image of an $(A)$-ring (resp., a strong $(A)$-ring). First, let $A$ be an
 $(A)$- ring (resp. a strong $(A)$-ring) and $I$ an ideal of
 $A$. Then $A/I$ is not, in general, an $(A)$-ring (resp., not a strong $(A)$-ring) as the following example shows:
%%%%%%%%%%%%%%%%%%%%%%%%%%%%%%%%%%%%%%%%%%%%%%%%%%%%%%%%%%%%%%%%%%%%%%%%%%%%%%%%%%%%%%%%%%%%%%%%%%%%%%%%%%%%%%%%%%%%%%%%%%%%
%%%%%%%%%%%%%%%%%%%%%%%%%%%%%%%%%%%%%%%%%%%%%%%%%%%%%%%%%%%%%%%%%%%%%%%%%%%%%%%%%%%%%%%%%%%%%%%%%%%%%%%%%%%%%%%%%%%%%%%%%%%%
\begin{example}\label{Pull.9}

 Let $R$ be a ring which is not an $(A)$-ring (in particular is not a strong $(A)$-ring), $S = R[X]$
 be the polynomial ring over $R$ and set $I :=(X^{n}) = SX^{n}$ the principal ideal of $S$. Then:\\
 {\bf 1)} $S$ is a strong $(A)$-ring (in particular is an $(A)$-ring by (\cite{HK}, Corollary 1).\\
 {\bf 2)} $S/I = R[X]/(X^{n})$ is not an $(A)$-ring ( by \cite{HKLR}, Corollary 2.6), in particular it is not a strong $(A)$-ring) .
\end{example}

%%%%%%%%%%%%%%%%%%%%%%%%%%%%%%%%%%%%%%%%%%%%%%%%%%%%%%%%%%%%%%%%%%%%%%%%%%%%%%%%%%%%%%%%%%%%%%%%%%%%%%%%%%%%%%%%%%%%%%%%%%%%
%%%%%%%%%%%%%%%%%%%%%%%%%%%%%%%%%%%%%%%%%%%%%%%%%%%%%%%%%%%%%%%%%%%%%%%%%%%%%%%%%%%%%%%%%%%%%%%%%%%%%%%%%%%%%%%%%%%%%%%%%%%%
 Now, assume that $A/I$ be a strong $(A)$-ring (resp., an $(A)$-ring), then $A$ is not, in general, a strong $(A)$-ring (resp., an $(A)$-ring)
 even if $I^{2}=0$ as the following example shows:
%%%%%%%%%%%%%%%%%%%%%%%%%%%%%%%%%%%%%%%%%%%%%%%%%%%%%%%%%%%%%%%%%%%%%%%%%%%%%%%%%%%%%%%%%%%%%%%%%%%%%%%%%%%%%%%%%%%%%%%%%%%%
%%%%%%%%%%%%%%%%%%%%%%%%%%%%%%%%%%%%%%%%%%%%%%%%%%%%%%%%%%%%%%%%%%%%%%%%%%%%%%%%%%%%%%%%%%%%%%%%%%%%%%%%%%%%%%%%%%%%%%%%%%%%
\begin{example}\label{Pull.10}

 Let $K$ be a field and let $A := K[X, Y]$ be the polynomial ring over $K$
 with indeterminates $X$ and $Y$, which is an $(A)$- ring.
Let $E = \bigoplus_{p}A/(p)$ where $p$ ranges over the primes of $A$
and $(p)$ is the principal ideal of $A$ generated by $p$.
Set $R :=A\propto E$ the trivial ring extension of $A$ by $E$. Then:\\
{\bf 1)} $R$ is not an $(A)$-ring (by \cite{HKLR}, Example 2.4). In particular, $R$ is not a strong $(A)$-ring).\\
{\bf 2)} Set $I := 0\propto E$. Then $R/I (= A = k[X,Y]$) is an
$(A)$-ring. In particular,  $R/I$ is a strong $(A)$-ring).
\end{example}
\bigskip

%%%%%%%%%%%%%%%%%%%%%%%%%%%%%%%%%%%%%%%%%%%%%%%%%%%%%%%%%%%%%%%%%%%%%
%%%%%%%%%%%%%%%%%%%%%%%%%%%%%%%%%%%%%%%%%%%%%%%%%%%%%%%%%%%%%%%%%%%%%
\section{Strong property $(A)$ in amalgamated duplication of a ring along an ideal}
\label{WCTE} This section is devoted to the transfer of strong
Property $(A)$ and Property $(A)$ in amalgamated duplication of a
ring along an ideal. Recall that a regular ideal
of a ring $R$ is an ideal which contains a regular element of $R$.\\

\begin{thm}\label{WCTE.1}
 Let $R$ be a ring , $I$ be a proper ideal of $R$ and let $S :=R
\bowtie I$ be  the amalgamated duplication of a ring $R$
along $I$. Then: \\
{\bf 1)} If $R\bowtie I$ is a strong $(A)$-ring, then so is $R$.\\
{\bf 2)} {\bf a)} If $R\bowtie I$ is an $(A)$-ring, then so is $R$.\\
{\bf b)} Assume that $I$ is a regular ideal of $R$. Then $R\bowtie
I$ is $(A)$- ring if and only if so is $R$.
\end{thm}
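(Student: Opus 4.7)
For part (1), the strategy is to lift a prospective ideal of $R$ to $S := R \bowtie I$, apply strong property $(A)$ there, and descend. Given a finitely generated ideal $J = \sum_{k=1}^n R a_k$ with each $a_k \in Z(R)$, I form $J^{*} := \sum_{k=1}^n S(a_k, a_k)$. Each generator is a zero-divisor in $S$: if $b_k \in R \setminus \{0\}$ satisfies $b_k a_k = 0$, then $(b_k, b_k)(a_k, a_k) = (0, 0)$ with $(b_k, b_k) \neq (0, 0)$. The strong $(A)$ hypothesis on $S$ then yields $(b, b + i) \in S \setminus \{(0, 0)\}$ annihilating every $(a_k, a_k)$; reading components gives $b a_k = 0$ and $(b + i) a_k = 0$, hence $i a_k = 0$. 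Whichever of $b$ or $i$ is nonzero supplies a nonzero annihilator of $J$ in $R$.

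For part (2)(a), the same lifting idea applies, but one must first verify that $J^{*} := \sum_k S(a_k, a_k)$ lies in $Z(S)$ before invoking property $(A)$ of $S$. A typical element of $J^{*}$ has the form $(X, X + T)$ with $X \in J$ and $T \in JI \subseteq J \cap I$, so $X + T \in J \subseteq Z(R)$. Pick $c \in R \setminus \{0\}$ with $c(X + T) = 0$; this forces $c X = -c T \in I$. In the easy subcase $c T = 0$ one also has $c X = 0$, so $(c, c) \in S$ annihilates $(X, X + T)$. In the harder subcase $c T \neq 0$, the element $(0, -c T) \in S$ is nonzero; multiplying $c X = -c T$ by $T$ yields $c T X = -c T^2$, hence $-c T (X + T) = -c T X - c T^2 = c T^2 - c T^2 = 0$, so $(0, -c T)$ kills $(X, X + T)$. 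Thus $J^{*} \subseteq Z(S)$; property $(A)$ of $S$ then produces $(b, b + i) \neq (0, 0)$ in $\mathrm{Ann}_S(J^{*})$, and the conclusion follows exactly as in part (1).

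For part (2)(b), the ``only if'' direction is part (2)(a). For the ``if'' direction I would use localization rather than a direct construction. Let $r_0 \in I$ be a regular element of $R$; then $(r_0, r_0)$ is regular in $S$, and $W := \{(r_0, r_0)^n : n \geq 0\}$ is a multiplicatively closed set of regular elements of $S$. Since $r_0 \in I$ becomes a unit in $R[1/r_0]$, one has $I[1/r_0] = R[1/r_0]$, so $W^{-1} S \cong R[1/r_0] \bowtie R[1/r_0]$; and for any ring $R'$ the equality $R' \bowtie R' = R' \times R'$ holds, because the second coordinate $r' + i$ ranges freely over $R'$. Thus $W^{-1} S \cong R[1/r_0] \times R[1/r_0]$. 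Chaining the available equivalences, $R$ is an $(A)$-ring iff $R[1/r_0]$ is (Lemma~\ref{Pull.7}), iff $R[1/r_0] \times R[1/r_0]$ is (\cite{HKLR}, Proposition~1.3), iff $W^{-1} S$ is, iff $S$ is (Lemma~\ref{Pull.7} applied in $S$). The main technical obstacle is the identification $W^{-1} S \cong R[1/r_0] \times R[1/r_0]$: one shows that any $(a/r_0^n, b/r_0^m) \in R[1/r_0] \times R[1/r_0]$ can be represented as $(a r_0^{k - n}, b r_0^{k - m})/(r_0, r_0)^k$ for $k$ chosen large enough that $r_0^{k - n}, r_0^{k - m} \in I$, ensuring that the numerator lies in $S$.
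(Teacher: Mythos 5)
Your argument is correct, and for parts (1) and (2)(a) it is essentially the paper's argument in different coordinates: the paper lifts $a_i$ to the generators $(a_i,0)$ of an ideal of $R\bowtie I$ written in the $R\oplus I$ presentation, which are exactly your diagonal generators $(a_i,a_i)$ in the product presentation, and then descends a nonzero annihilator $(b,j)$ by the same two-case analysis ($b\neq 0$ or the $I$-component nonzero). One genuine difference in (2)(a) is how you certify $J^{*}\subseteq Z(S)$: the paper annihilates the first coordinate $b=\sum\alpha_i a_i$ and multiplies by $(a,-a)$, an element whose membership in $R\bowtie I$ is not justified (it requires $a\in I$ in the coordinates being used), whereas you annihilate the second coordinate $X+T\in J$ and produce annihilators $(c,c)$ or $(0,-cT)$ that visibly lie in $S$ because $cT\in I$; your verification is therefore tighter than the one printed. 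In (2)(b) you take a genuinely different route: the paper passes to the total quotient ring and quotes D'Anna--Fontana's identification $Q(R\bowtie I)=Q(R)\times Q(R)$ (Lemma~\ref{WCTE.2}) together with \cite{HKLR} (Propositions 1.3 and 2.14), while you localize only at the powers of a single regular element $r_0\in I$ and prove by hand that $W^{-1}(R\bowtie I)\cong R[1/r_0]\times R[1/r_0]$, then run the same chain through Lemma~\ref{Pull.7} and the product stability of property $(A)$. The paper's version is shorter by citation; yours is more self-contained and elementary, needing only one regular element of $I$ and no facts about $Q(R\bowtie I)$. The only loose end is that you check surjectivity but not injectivity of the natural map $W^{-1}S\to R[1/r_0]\times R[1/r_0]$; that is routine (if $(r,r+e)$ dies in the target, a large power of $(r_0,r_0)$ kills it in $S$), so it does not affect correctness.
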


Before proving this Theorem, we establish the following Lemma.\\

\begin{lemma}\label{WCTE.2}(\cite{AF2}, Corollary 3.3(d))
 Let $R$ be  a commutative ring, $I$ be a regular ideal of $R$ and
 $Q(R)$ be the total ring of fractions of $R$. Then $Q(R\bowtie I) =
 Q(R)\times Q(R)$.
\end{lemma}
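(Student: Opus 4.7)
The plan is to establish $Q(R\bowtie I) = Q(R)\times Q(R)$ by promoting the natural inclusion $\iota\colon R\bowtie I \hookrightarrow R\times R$ to an isomorphism of total rings of fractions. Since $I$ is a regular ideal, fix once and for all a regular element $i_{0}\in I$; this element will do all the heavy lifting.

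The first and most important step is to characterize the regular elements of $R\bowtie I$ as exactly those $(r,r+i)$ with \emph{both} $r$ and $r+i$ regular in $R$. Sufficiency is routine from the definition of the product on $R\bowtie I$. For necessity, suppose $(r,r+i)$ is regular but $rs=0$ for some $s\neq 0$ in $R$. If $s\in I$, then $(s,0) = (s, s+(-s))$ is a nonzero element of $R\bowtie I$ with $(r,r+i)(s,0)=(rs,0)=(0,0)$, contradiction. If $s\notin I$, replace $s$ by $si_{0}$: this is still nonzero (regularity of $i_{0}$) and still annihilates $r$, and it now lies in $I$, so the previous case applies. A symmetric argument forces $r+i$ to be regular.

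Because $\iota$ now sends regular elements of $R\bowtie I$ to regular elements of $R\times R$, it extends by the universal property to a ring homomorphism $\iota^{*}\colon Q(R\bowtie I)\to Q(R\times R)=Q(R)\times Q(R)$. For surjectivity, take $(a/b,c/d)\in Q(R)\times Q(R)$ with $b,d$ regular; then $(bdi_{0},bdi_{0})$ is an element of $R\bowtie I$ whose two coordinates, being products of regular elements, are regular in $R$, so by the characterization above it is regular in $R\bowtie I$. Meanwhile $(dai_{0},bci_{0})$ lies in $R\bowtie I$ because $bci_{0}-dai_{0} = i_{0}(bc-da)\in I$, and the quotient $(dai_{0},bci_{0})/(bdi_{0},bdi_{0})$ is sent by $\iota^{*}$ to $(a/b,c/d)$. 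For injectivity, if $(r,r+i)/(s,s+j)$ maps to zero, there exist regular $u,v\in R$ with $ur=0$ and $v(r+i)=0$; then $(ui_{0},vi_{0})\in R\bowtie I$ (its two entries differ by $(v-u)i_{0}\in I$), both coordinates are regular, and it annihilates $(r,r+i)$, whence the fraction already vanishes in $Q(R\bowtie I)$.

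The main technical obstacle is the regularity characterization of step one: a priori, an annihilator of $r$ need not lie in $I$, so one cannot directly feed it into an element of $R\bowtie I$ of the form $(s,0)$. The trick that unlocks everything is the multiplication-by-$i_{0}$ device, which converts an arbitrary annihilator into one living in $I$ without killing it. This is also exactly where the regularity hypothesis on $I$ enters; without it the identification with $Q(R)\times Q(R)$ breaks down.
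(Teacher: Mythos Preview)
Your proof is correct and self-contained. Note, however, that the paper does \emph{not} give its own proof of this lemma: it is stated with a direct citation to D'Anna and Fontana (\cite{AF2}, Corollary~3.3(d)) and used as a black box in the proof of Theorem~3.1(2)(b). So there is nothing in the paper to compare your argument against; you have supplied a proof where the authors chose to quote one.

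Two minor remarks on your write-up. First, the injectivity step is simpler than you make it: if $(r,r+i)/(s,s+j)$ maps to $0$ in $Q(R)\times Q(R)$, then $r/s=0$ and $(r+i)/(s+j)=0$ in $Q(R)$; but in a total ring of fractions the denominators are already regular, so the existence of a regular $u$ with $ur=0$ forces $r=0$ outright, and likewise $r+i=0$. Hence the numerator $(r,r+i)$ is zero and no auxiliary $(ui_{0},vi_{0})$ is needed. Your argument is still valid, merely heavier than necessary. Second, your identification $Q(R\times R)=Q(R)\times Q(R)$ is used implicitly; it is straightforward (regular elements of $R\times R$ are exactly the pairs of regular elements) but worth a sentence.

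The key insight --- that multiplying by the fixed regular $i_{0}\in I$ lets one force arbitrary ring elements into $I$ without killing them, and thus lets one manufacture the needed elements of $R\bowtie I$ --- is exactly the right one, and your characterization of the regular elements of $R\bowtie I$ is the heart of the matter.
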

\bigskip
%%%%%%%%%%%%%%%%%%%%%%%%%%%%%%%%%%%%%%%%%%%%%%%%%%%%%%%%%%%%%%%%%%%%%%%%%%%%%%%%%%%%%%%%%%%%%%%%%%%%%%%%%%%%%%%%%%%%%%%
%%%%%%%%%%%%%%%%%%%%%%%%%%%%%%%%%%%%%%%%%%%%%%%%%%%%%%%%%%%%%%%%%%%%%%%%%%%%%%%%%%%%%%%%%%%%%%%%%%%%%%%%%%%%%%%%%%%%%%%
\begin{proof}[Proof of Theorem 3.1]

 {\bf 1)} Let $J=\sum_{i=1}^{n} R a_{i} $ be a finitely generated ideal
of $R$ such that $a_{i}\in Z(R)$ for each $i={1,..,n}$; that is
there exists $0\neq b_{i}\in R $ such that $ b_{i}a_{i}=0$. Set
 $L := \sum_{i=1}^{n} (R\bowtie I)(a_{i}, 0)$ be a finitely generated ideal of
 $S$. But $(a_{i}, 0)\in Z(R\bowtie I)$ since $(b_{i}, 0)(a_{i}, 0) =(b_{i}a_{i}, 0) = (0,
0)$ since $b_{i}a_{i} = 0$. Hence, there exists $(0, 0)\neq (b,
j)\in S$ such that $(0, 0) = (b, j) L =(b, j)\sum_{i=1}^{n}
(R\bowtie I)(a_{i},0)=(b, j)(\sum_{i=1}^{n}\alpha_{i} a_{i},
\sum_{i=1}^{n}j_{i}a_{i})$  since $S$ is a strong $(A)$-ring. We
obtain $b \sum_{i=1}^{n} R a_{i} = 0$  and  $j
\sum_{i=1}^{n}j_{i}a_{i}= 0$. Two cases are then possible:\\
Case 1:  $b\neq 0$. Then $bJ = 0$, as desired.\\
Case 2:  $b = 0$. Then  $0\neq j\in I$ and we have
 $(0,0)=(0, j)\sum_{i=1}^{n} (R\bowtie I)(a_{i},
0)= \sum_{i=1}^{n} (R\bowtie I)(0, ja_{i})$. Hence $ja_{i}=0$ for
all $i=1,..,n$ and so $jJ=0$. \\
In all cases, $J$ has a non zero annihilator. Therefore, $R$ is a strong $(A)$-ring.\\

 {\bf 2)} {\bf a)}  Let $J:=\sum_{i=1}^{n} R a_{i} ( \subseteq Z(R))$ be a finitely generated
ideal of $R$ . Set $L := \sum_{i=1}^{n} (R\bowtie I)(a_{i}, 0)$ be
a finitely generated ideal of $S$. Our aim is to show that
$L\subseteq Z(R\bowtie I)$. Let $(b, j)\in L$, that is
$(b,j)=\sum_{i=1}^{n}(\alpha _{i}, j_{i})(a_{i},
0)=(\sum_{i=1}^{n}\alpha_{i} a_{i}, \sum_{i=1}^{n}j_{i}a_{i})$. Two cases are then possible:\\
Case 1: $b = \sum_{i=1}^{n}\alpha_{i} a_{i}\neq {0}$. Hence, there
exists $0\neq a\in {R}$ such that $a(\sum_{i=1}^{n}\alpha_{i}
a_{i})=0$. Since $J\subseteq Z(R)$,
we obtain $(b, f)(a,-a)=(0, 0)$ as desired.\\
Case 2: $b = \sum_{i=1}^{n}\alpha_{i} a_{i}=0$. Then $(0, j)(a,
-a)=
(0, 0)$ for all $0\neq a \in R$. \\
Then in all cases $(b, j)\in Z(R\bowtie I)$, and so $L\subseteq
Z(R\bowtie I)$.
 Since $S$ is an $(A)$-ring, then for some $0 \not= (b,
j)\in S$, $(0, 0)= (b, j) L $, that is $(0, 0)=(b,
j)(\sum_{i=1}^{n}\alpha_{i} a_{i}, \sum_{i=1}^{n}j_{i}a_{i})$. We
obtain $b \sum_{i=1}^{n} R a_{i} =
0$ and  $j \sum_{i=1}^{n}j_{i}a_{i}= 0$. Two cases are then possible:\\
$\star _{1}$:  $b\neq 0$. Then $bJ = 0$, as desired.\\
$\star _{2}$:  $b = 0$. Then   $0\neq j\in I$ and we have
 $(0,0)=(0, j)\sum_{i=1}^{n} (R\bowtie I)(a_{i},
0)= \sum_{i=1}^{n} (R\bowtie I)(0, ja_{i})$. Therefore, $ja_{i}=0$
for
all $i=1,..,n$ and so $jJ=0$ .\\
In all cases, $J$ has a nonzero annihilator. Hence, $R$ is an
$(A)$-ring.\\
 {\bf b)} If $R\bowtie I$ is an $(A)$-ring, then so is $R$ by (2-a).
 Conversely, assume that $R$ is an $(A)$-ring. Then $Q(R)$ is an $(A)$-ring  by (\cite{HKLR}, Proposition 2.14),
 and so
$Q(R)\times Q(R)$ is an $(A)$-ring  by (\cite{HKLR}, Proposition
1.3), that means that $Q(R\bowtie I)$  is an $(A)$- ring (by Lemma
3.2). Therefore, $R\bowtie I$ is an $(A)$-ring by (\cite{HKLR},
Proposition 2.14) and this completes the proof of Theorem 3.1.
\end{proof}
\bigskip
%%%%%%%%%%%%%%%%%%%%%%%%%%%%%%%%%%%%%%%%%%%%%%%%%%%%%%%%%%%%%%%%%%%%%%%%%%%%%%%%%%%%%%%
%%%%%%%%%%%%%%%%%%%%%%%%%%%%%%%%%%%%%%%%%%%%%%%%%%%%%%%%%%%%%%%%%%%%%%%%%%%%%%%%%%%%%%%%%%%%%%%%%%
In general, a ring $R$ is a strong $(A)$-ring does not imply that
the amalgamated duplication of $R$ along an ideal $I$ of $R$  is a
strong $(A)$-ring even if $R$ is a local integral domain and $I$
is its maximal ideal as the following example shows. It is also a
new example
of a ring which is an $(A)$-ring and is not a strong $(A)$-ring .\\
\bigskip
%%%%%%%%%%%%%%%%%%%%%%%%%%%%%%%%%%%%%%%%%%%%%%%%%%%%%%%%%%%%%%%%%%%%%%%%%%%%%%%%%%%%%%%%%%%%%%%%%%%%%%%%%%%%%%%%%%%%%%%%%%%%%%%%%%%%%%%
%%%%%%%%%%%%%%%%%%%%%%%%%%%%%%%%%%%%%%%%%%%%%%%%%%%%%%%%%%%%%%%%%%%%%%%%%%%%%%%%%%%%%%%%%%%%%%%%%%%%%%%%%%%%%%%%%%%%%%%%%%%%%%%%%%%%%%%
\begin{example}\label{WCTE.4}
Let $K$ be a field and let $R = K[[X]]$ be the power series ring
over $K$ . Let $I =(X)$ be the ideal of $R$ generated by $X$ .
Consider $S :=R \bowtie I$ be the amalgamated duplication of a
ring $R$ along
$I$. Then:\\
 {\bf 1)} $R$ is a strong $(A)$-ring.\\
 {\bf 2)} $S:= R \bowtie I$ is not a strong $(A)$-ring.\\
 {\bf 3)} $S$ is an $(A)$-ring.
\end{example}

\begin{proof}
{\bf 1)} Clear since $R$ is an integral domain.\\
 {\bf 2)} Let $J := S(0, X) + S(X, -X) $  be a finitely
generated ideal of
 $S$, with $(0, X)(X,-X) = 0$ . We
claim that does not exist $(P,P+ Q)\in S$ such that $(P,P+Q)J =
0$. Deny. There exists $(P,P+ Q)\in S\setminus(0,0)$ such that
$(P,P+ Q)J = 0$. But $(P,P+ Q)(X, -X) = 0$ implies that $P = 0$ (
since $R$ is a domain ) and $(0, Q)(0,X) = 0$ implies that $Q =
0$, a contradiction.
Therefore, $S$ is not a strong $(A)$-ring.\\
 {\bf 3)} By Theorem 3.1(2.b)
\end{proof}
\bigskip
%%%%%%%%%%%%%%%%%%%%%%%%%%%%%%%%%%%%%%%%%%%%%%%%%%%%%%%%%%%%%%%%%%%%%%%%%%%%%%%%%%%%%%%

%%%%%%%%%%%%%%%%%%%%%%%%%%%%%%%%%%%%%%%%%%%%%%%%%%%%%%%%%%%%%
%%%%%%%%%%%%%%%%%%%%%%%%%%%%%%%%%%%%%%%%%%%%%%%%%%%%%%%%%%%%%%%

%%%%%%%%%%%%%%%%%%%%%%%%%%%%%%%%%%%%%%%%%%%%%%%%%%%%%%%
%%%%%%%%%%%%%%%%%%%%%%%%%%%%%%%%%%%%%%%%%%%%%%%%%%%%%%%

%%%%%%%%%%%%%%%%%%%%%%%%%%%%%%%%%%%%%%%%%%%%%%%%%%%%%%%%%
%%%%%%%%%%%%%%%%%%%%%%%%%%%%%%%%%%%%%%%%%%%%%%%%%%%%%%%%%
\end{document}